\documentclass[11pt]{amsart}
\usepackage{amstext}
\usepackage{amsthm}
\usepackage{amsopn}
\usepackage{amsfonts}
\usepackage{amsmath}
\usepackage{latexsym}
\usepackage{amscd} 
\usepackage{amssymb}
\usepackage{amsmath}
\swapnumbers

\newtheorem{thm}[equation]{Theorem}

\newtheorem{prop}[equation]{Proposition}

\newtheorem{lem}[equation]{Lemma}

\theoremstyle{definition}

\newtheorem{remark}[equation]{Remark}

\numberwithin{equation}{section}

\newcommand{\Nrd}{\operatorname{Nrd}}
\newcommand{\Trd}{\operatorname{Trd}}

\newcommand{\GG}{{\mathbb G}}

\newcommand{\ZZ}{{\mathbb Z}}
\newcommand{\PP}{{\mathbb P}}
\newcommand{\QQ}{{\mathbb Q}}
\newcommand{\RR}{{\mathbb R}}

\newcommand{\CC}{{\mathbb C}}

\newcommand{\Ker}{\operatorname{Ker}}
\newcommand{\Aut}{\operatorname{Aut}}
\newcommand{\Spec}{\operatorname{Spec}}

\newcommand{\GL}{{\operatorname{GL}}}
\newcommand{\SL}{{\operatorname{SL}}}
\newcommand{\SO}{{\operatorname{SO}}}

\newcommand{\Spin}{{\operatorname{Spin}}}

\newcommand{\simlgr}{\buildrel \sim \over \longrightarrow}

\newcommand{\PGL}{{\operatorname{PGL}}}




\newcommand{\calO}{\operatorname{\mathcal O}}
\newcommand{\calQ}{\operatorname{\mathcal Q}}

\newcommand{\tr}{\operatorname{tr}}

\newcommand{\uIsom}{\underline{\rm Isom}}

\newcommand{\uAut}{\underline{\rm Aut}}

\begin{document}

\title[Octonion algebras]{Octonion algebras over rings are not determined by their norms}

\author[P. Gille]{Philippe Gille}
\address{UMR 8552 du CNRS, DMA, Ecole
Normale Sup\'erieure, F-75005  Paris, France}
\email{Philippe.Gille@ens.fr} 
\date{\today}

\maketitle

\noindent{\bf R\'esum\'e:} R\'epondant \`a une question de H. Petersson, nous contruisons une classe
d'exemples de paires d'alg\`ebres d'octonions d\'efinies sur un anneau
ayant des normes isom\'etriques.
 
\medskip

\noindent{\bf Abstract:} Answering a question of H. Petersson, we provide
a class of examples  of pair of octonion algebras over a ring having isometric
norms.
 
\medskip

\noindent{\bf Keywords:} Octonion algebras, torsors, descent. 

\medskip

\noindent{\bf MSC: 14L24, 20G41}.

\smallskip

\section{Introduction}

If $Q$ is  a quaternion  algebra over a field $k$, we know from Witt that
$Q$ is determined by its norm  \cite[\S 1.7]{SV}. This result  has been extended  over rings by Knus-Ojanguren-Sridharan (\cite[prop. 4.4]{KOS} ,  \cite[V.4.3.2]{K}) and holds actually over an arbitrary base (\S \ref{sect-quat}).

If $C$ is a octonion algebra over $k$, we know from van der Blij-Springer that 
it  is determined by its norm form \cite[claim 2.3]{BS} (see also \cite[\S 1.7]{SV}); more generally
it is true  over local rings (Bix, \cite[lemma 1.1]{B}). 
In his Lens lecture (May 21-25, 2012), H. Petersson raised the question 
whether it remains true over arbitrary commutative rings.

The goal of this note is to produce a  counterexample to this question, namely 
an example of two non-isomorphic octonions algebras over some commutative ring $R$
having isometric norms.  Our argument is based on the study of fibrations of group schemes
and uses topological fibrations which makes clear why it holds for quaternion algebras
and not for octonions.

\medskip

For the theory of reductive group schemes and related objects (e.g. Lie algebra sheaves,
homogeneous spaces, quadratic spaces, ... ) we refer to SGA3 \cite{SGA3}
and to the book by Demazure-Gabriel \cite{DG}. The sheaves in sets or groups are denoted as $\underline F$ and
are for the fppf (also called flat) topology over a base scheme $S$.

\medskip

\noindent{\bf Acknowledgments:}
We thank V. Chernousov and E. Neher for useful discussions.
We thank M. Brion for his nice remark (at the end).
Finally we thank the referee for his comments.

\section{Quaternion algebras and norms}\label{sect-quat}

Let $S$ be a scheme. 
By a quaternion\footnote{Knus' definition requests less conditions \cite[1.3.7]{K};
we deal here then with ``separable quaternions algebras''.} algebra over $S$, we mean 
a rank $4$ Azumaya $\calO_S$--algebra
$\calQ$.
Equivalently, it  is an \'etale $S$--form of the matrix algebra $M_2(\calO_S)$, namely the twist
 of $M_2(\calO_S)$ by the $\PGL_2$--torsor \break 
 $E=\uIsom_{alg}( M_2(\calO_S), \calQ )$.

By descent, it follows that isomorphism classes of quaternion $S$-algebras correspond
to the \'etale cohomology set $H^1(S, \PGL_2)$. 

The reduced norm (resp. trace) $\Nrd:  \calQ \to \calO_S$ (resp: $\Trd$) 
is the twist by $E$  of the determinant map
$M_2(\calO_S) \to \calO_S$ (resp. the trace),  it is a quadratic (resp. linear) form over $S$. 

Furthermore the  canonical involution $X \mapsto \tr(X)-X$ on $M_2(\calO_S)$ 
induces by descent the  canonical involution of $\calQ$. 

The $S$--group scheme $\SL_1(\calQ)$  (resp. $\PGL_2( \calQ)$, $\SO(\calQ, N_{\calQ})$)
is the twist by $E$ of  $\SL_2/S$  (resp. $\PGL_2/S$, $\SO(M_2, \det)/S$).

The point is that  the semisimple group scheme $\SO(\calQ, N_{\calQ})$ is of type $A_1 \times A_1$
and its universal cover is $\SL_1(\calQ) \times \SL_1(\calQ)$.

\begin{lem}
We have an exact sequence of group schemes
$$
1 \to \mu_2 \to \SL_1(\calQ) \times \SL_1(\calQ) \buildrel f \over \to \SO(\calQ, N_{\calQ}) \to 1  
$$
where $f(x,y).q= \, x q \, y^{-1}$ for every $q \in \calQ$.
\end{lem}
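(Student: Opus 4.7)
The plan is to establish, in order, that $f$ is a well-defined $S$-group scheme homomorphism into $\SO(Q,N_Q)$, that its scheme-theoretic kernel is $\mu_2$ embedded diagonally as $z \mapsto (z,z)$, and that $f$ is fppf (in fact smooth) surjective. Throughout, I think of $f$ as a morphism of fppf sheaves of groups and argue functorially on $T$-points for an arbitrary $S$-scheme $T$.

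\textbf{Well-definedness and landing in $\SO$.} Multiplicativity of the reduced norm on $Q_T$ gives $N_Q(xqy^{-1}) = \Nrd(x)\,N_Q(q)\,\Nrd(y)^{-1} = N_Q(q)$ when $\Nrd(x)=\Nrd(y)=1$, so $f(x,y) \in \Orth(Q,N_Q)(T)$, and the formula $f(xx',yy') = f(x,y) \circ f(x',y')$ is immediate. To see the image lies in $\SO$, observe that $\SL_1(Q)\times \SL_1(Q)$ is $S$-smooth with geometrically connected fibres (étale locally it is $\SL_2 \times \SL_2$), hence $f$ factors through the identity component $\SO(Q,N_Q) \subset \Orth(Q,N_Q)$.

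\textbf{Kernel.} A $T$-point $(x,y)$ lies in $\ker(f)$ iff $xq=qy$ for every $q \in Q_T$. Taking $q=1$ forces $y=x$; then $xq = qx$ for every $q$, so $x$ lies in the centre of $Q_T$, which by the Azumaya property equals $\calO_T$. The condition $\Nrd(x) = 1$ for $x \in \calO_T$ reads $x^2 = 1$, so $\ker(f) = \mu_2$ diagonally embedded, as asserted.

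\textbf{Surjectivity.} Surjectivity is a local property for the fppf topology, so I may check it after an étale cover trivialising the $\PGL_2$-torsor $E$ attached to $Q$; this reduces to the case $Q = M_2(\calO_S)$ with $N_Q = \det$ and $f \colon \SL_2\times \SL_2 \to \SO(M_2,\det)$ the classical map $(x,y)\cdot q = xqy^{-1}$. In this split case the source and target are both smooth affine $S$-group schemes of relative dimension $6$; the computation of $\ker(f) = \mu_2$ above is then fibrewise of dimension $0$, and an explicit computation on Lie algebras (the tangent map at the identity is $(a,b) \mapsto (q \mapsto aq - qb)$, which surjects onto the Lie algebra $\mathfrak{so}(M_2,\det)$ of trace-zero endomorphisms preserving the pairing) shows $df$ is surjective on every fibre. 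Hence $f$ is smooth in the split case, and smooth morphisms with surjective fibres are surjective; by dimension and the finiteness of the kernel, each fibre of $f$ surjects onto the identity component, which is all of $\SO$.

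The only nontrivial step is the last one: the surjectivity of $f$ requires descent to the split case and a Lie-algebra computation. Everything else is formal from the Azumaya structure of $Q$ and multiplicativity of $\Nrd$.
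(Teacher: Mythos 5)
Your well-definedness and kernel computations are fine and essentially identical to the paper's (the paper substitutes $A=y$ where you substitute $q=1$; both reduce to centrality in an Azumaya algebra and $x^2=1$). The gap is in the surjectivity step. Your claim that the tangent map $(a,b)\mapsto(q\mapsto aq-qb)$ from $\mathfrak{sl}_2\times\mathfrak{sl}_2$ to $\mathfrak{so}(M_2,\det)$ is surjective on every fibre is false in residue characteristic $2$: there $\Lie(\mu_2)\neq 0$, the diagonal scalar pairs $(a_0,a_0)$ are killed (scalars are trace zero when $2=0$), so the differential has nontrivial kernel between two $6$-dimensional Lie algebras and cannot be surjective. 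Consequently $f$ is \emph{not} smooth over fibres of characteristic $2$ --- it is a $\mu_2$-covering with $\mu_2$ infinitesimal --- and the chain ``$df$ surjective $\Rightarrow$ $f$ smooth $\Rightarrow$ image open $\Rightarrow$ image is all of the connected group'' collapses exactly where the lemma still needs to hold, since it is stated over an arbitrary base $S$ and exactness is meant for the fppf topology.

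The correct statement is that $f$ is faithfully flat rather than smooth, and this is how the paper proceeds: after reducing to $Q=M_2$ over $\ZZ$ and computing the kernel, it quotients by the central $\mu_2$ to get a monomorphism $\widetilde f\colon(\SL_2\times\SL_2)/\mu_2\to\SO(M_2,\det)$ of semisimple group schemes, invokes \cite[XVI.1.5.a]{SGA3} to see $\widetilde f$ is a closed immersion, checks it is an isomorphism on the $\QQ$-fibre by the dimension count you also use, and then uses flatness of $\SO(M_2,\det)$ over $\ZZ$ (torsion-freeness of its coordinate ring) to promote this to an isomorphism over $\ZZ$. Surjectivity of $f$ as a map of fppf sheaves is then automatic from the fppf quotient presentation $\SL_2\times\SL_2\to(\SL_2\times\SL_2)/\mu_2$. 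Your argument is salvageable verbatim only if you restrict to bases where $2$ is invertible; over a general base you need a flatness argument of this kind (or a direct verification that $f$ is fppf) in place of the Lie-algebra criterion.
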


\begin{proof} We do first the case of $S=\Spec(\ZZ)$ and  $\calQ=M_2(\ZZ)$.
We have $\mu_2 \subset \ker(f)$ and let us show the converse inclusion. 
Let $R$ be a ring and pick $(x,y) \in \ker(f)(R)$. A such element satisfies $x A y^{-1}=A$ for each $A \in M_2(R)$.
By taking $A=y$, we see that $x=y$ so that  $x A x^{-1}=A$ for each $A \in M_2(R)$.
By taking the canonical $R$-basis of $M_2(R)$, it follows that $x \in \GG_m(R)$.
Since $x \in \SL_2(R)$, we conclude that $(x,y) \in \mu_2(R)$. Thus $\mu_2=\ker(f)$.

Since $\mu_2$ is a central subgroup of $\SL_2 \times_R \SL_2$, we can mod out by $\mu_2$ \cite[XXII.4.3]{SGA3}
and get a monomorphism $\widetilde f: (\SL_2 \times_R \SL_2)/ \mu_2 \to \SO(M_2,det)$ of semisimple
 group schemes. According to \cite[XVI.1.5.a]{SGA3}, it is a closed immersion.
On both sides,  each $\QQ$-fiber  is smooth connected of dimension 
$6$. It follows that  $\widetilde f_\QQ$ is an isomorphism. Since 
$\SO(M_2,det)$ is flat over $\ZZ$, we conclude that $\widetilde f$ is an isomorphism.  

The general case follows again by twisting everything by the $\PGL_2$--torsor $E$.

\end{proof}

The adjoint map $Ad: \PGL_2 \to \GL(M_2)$ 
gives rise to  the  closed $S$--immersion $\PGL_2 \to {\rm O}( M_2, \det)$
where ${\rm O}( M_2, \det)$ stands for the orthogonal group scheme of the nonsingular
quadratic form $\det$ \cite[III.5.2]{DG}.
It is equipped with the Dickson map ${\rm D}: {\rm O}( M_2, \det) \to \ZZ/2\ZZ$
whose kernel is  by definition the special linear group 
$\SO( M_2, \det)$.

By twisting by the torsor $E$, it provides a closed $S$-immersion 
$$
Ad: \PGL_1(\calQ) \to {\rm O}( \calQ, \Nrd), q \mapsto Ad(q)  
$$
where $\PGL_1(\calQ)$ stands for the group scheme $\GL_1(\calQ)/ \GG_m$
of projective units. 

On the other hand, the orthogonal $S$--group ${\rm O}( \calQ, \Nrd)$ acts
on $\SL_1(\calQ)=\Ker(\GL_1(\calQ)) \to \GG_m)$  by the action induced from the standard action of 
$\GL_1(\calQ)$ on $\calQ$.

\begin{prop}\label{Max} (1) The $S$--scheme $\SL_1(\calQ)$ is  a left homogeneous space 
(with respect to the flat topology)
under the action 
of  $\SO( \calQ, \Nrd)$ and a fortiori under the action of ${\rm O}( \calQ, \Nrd)$.

\smallskip

(2) The orbit map
$$
 u : \SO( \calQ, \Nrd) \to \SL_1(\calQ), \enskip g \mapsto g. 1 
$$
 is a split $\PGL_1(\calQ)$--torsor.

\end{prop}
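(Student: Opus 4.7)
The plan is to use the previous lemma, which realizes $\SO(\calQ,\Nrd)$ as the quotient $(\SL_1(\calQ)\times \SL_1(\calQ))/\mu_2$ via $f(x,y)(a)=xay^{-1}$. In these coordinates the orbit map becomes $u(f(x,y))=xy^{-1}$, so its surjectivity (and more) will follow from explicit manipulations with $f$.

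For (1), first observe that $\SO(\calQ,\Nrd)$ preserves $\Nrd$, hence sends $T$-points of the closed subscheme $\SL_1(\calQ)\subset \calQ$ (cut out by $\Nrd=1$) to such points; so the action of $\SO(\calQ,\Nrd)$ on $\calQ$ does restrict to an action on $\SL_1(\calQ)$. I then define a morphism $\sigma:\SL_1(\calQ)\to \SO(\calQ,\Nrd)$ by $\sigma(q)=L_q$, left multiplication by $q$. Since $L_q=f(q,1)$, the lemma ensures $L_q$ lies in $\SO(\calQ,\Nrd)$, and tautologically $u\circ\sigma=\id$. This proves the surjectivity needed for (1) and already supplies the ``split'' part of (2).

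For the torsor structure in (2), I next identify the stabilizer $H\subset\SO(\calQ,\Nrd)$ of $1\in\SL_1(\calQ)$. Fppf-locally, $g=f(x,y)$ fixes $1$ precisely when $xy^{-1}=1$, i.e.\ $x=y$, so $H$ is the image of the diagonal $\Delta:\SL_1(\calQ)\to \SL_1(\calQ)\times \SL_1(\calQ)$ in $\SO(\calQ,\Nrd)$. Because $\mu_2=\ker f$ is embedded diagonally, this image equals $\SL_1(\calQ)/\mu_2$, which is canonically $\PGL_1(\calQ)$ (obtained by twisting the standard central isogeny $\SL_2\to \PGL_2$), acting on $\calQ$ by conjugation $Ad$.

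Finally, I check that
$$
\Phi:\SL_1(\calQ)\times_S\PGL_1(\calQ)\longrightarrow \SO(\calQ,\Nrd),\qquad (q,h)\longmapsto L_q\circ Ad(h),
$$
is an isomorphism of $S$-schemes, by exhibiting the explicit inverse $g\mapsto (u(g),\,L_{u(g)}^{-1}\circ g)$; one verifies that $L_{u(g)}^{-1}\circ g$ fixes $1$ and hence lies in $H=\PGL_1(\calQ)$. This isomorphism intertwines $u$ with the first projection and is equivariant for the right $\PGL_1(\calQ)$-action, so $u$ is a trivial $\PGL_1(\calQ)$-torsor, completing (2). The main hurdle is the clean identification of $H$ with $\PGL_1(\calQ)$ via $Ad$; once that is in place, the rest is straightforward bookkeeping with the map $f$.
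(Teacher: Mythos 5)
Your proof is correct and follows essentially the same route as the paper: the section $q \mapsto L_q$ of the orbit map yields homogeneity and splitness, and identifying the stabilizer of $1$ with the diagonal copy $\SL_1(\calQ)/\mu_2 \cong \PGL_1(\calQ)$ acting by conjugation yields the torsor structure. The only cosmetic difference is that you write out the global trivialization $\Phi$ explicitly, whereas the paper packages the same computation as an identification of fppf quotients $\SO(\calQ,\Nrd)/\PGL_1(\calQ) \simlgr \SL_1(\calQ)$.
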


\begin{proof} We put $G/S= \SO( \calQ, \Nrd)$, $H/S= \PGL_1(\calQ)$ and 
$X/S= \SL_1(\calQ)$.

\smallskip

\noindent (1) We have to check the definition \cite[IV.6.7]{SGA3}, namely to establish the properties

\medskip

(a) the map $G \times_S X \to X \times_S X$, $(g,x) \mapsto (x, g.x)$ 
is an epimorphism of flat sheaves;  

\smallskip

(b)  $f: X \to S$ has sections locally with respect to the flat topology.

\medskip

The condition (b) is obvious in our case since $f$ has a global section given by the unit of $X=\SL_1( \calQ)$.
Condition (a) will follow of  the following stronger condition:

\medskip

(c) $X(T)$ is homogeneous over $G(T)$ for each
$S$-scheme $T$.

\smallskip 

We are given $T/S$ and a couple of quaternions $q_1,q_2 \in X(T)$ of reduced norm one.
We put $q=q_2 \, q_1^{-1} \in X(T)$.
The left translation  $L_q$ is an element of $G(T)$ which satisfies
$L_q.q_1=q_2$. This shows (c).

\smallskip

\noindent (2)    
The map $u \circ f:  \SL_1(\calQ) \times \SL_1(\calQ) \to \SL_1(\calQ)$ reads as follows: 
$(u \circ f) (x,y)=xy^{-1}$. Therefore $\SL_1(\calQ) \times_S \SL_1(\calQ) / \SL_1(\calQ) \simlgr \SL_1(\calQ)$
where    $\SL_1(\calQ)$ acts on $\SL_1(\calQ) \times_S \SL_1(\calQ)$ by $z. (x,y)= (x\,z, z^{-1}\,x)$.
By modding out by the diagonal $\mu_2$ of $\SL_1(\calQ) \times_S \SL_1(\calQ)$,
 we get an isomorphism of flat sheaves $$
\SO( \calQ, \Nrd)/ \PGL_1(\calQ) \simlgr \SL_1(\calQ)
$$ 
where $\PGL_1(\calQ)$ embeds by $h$ in $\SO( \calQ, \Nrd)$.
\end{proof}

\smallskip

\begin{lem} ${\rm O}(\calQ, \Nrd)= \SO( \calQ, \Nrd) \times_S \ZZ/2\ZZ$ where
$\ZZ/2\ZZ$ is the $S$-subgroup ${\rm O}(\Nrd)$ defined by the canonical involution.
\end{lem}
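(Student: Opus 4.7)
The plan is to exhibit the canonical involution $\sigma$ as a section of the Dickson short exact sequence of $S$-group schemes
\[
1 \to \SO(\calQ, \Nrd) \to \Orth(\calQ, \Nrd) \xrightarrow{\Delta} \underline{\ZZ/2\ZZ} \to 1,
\]
from which the claimed decomposition will follow. As a first step, I would verify that $\sigma$ defines a section $S \to \Orth(\calQ, \Nrd)$, i.e., that $\sigma$ preserves $\Nrd$. This rests on the standard identities $q\,\sigma(q) = \Nrd(q)\cdot 1$ and $\sigma^2 = \id$, which yield $\Nrd(\sigma(q)) = \sigma(q)\cdot q = \Nrd(q)$; both identities reduce by descent to a classical computation on $M_2$.

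The crucial step is to check that $\Delta(\sigma) = 1$, equivalently that $\sigma \notin \SO(\calQ, \Nrd)$. Since $\underline{\ZZ/2\ZZ}$ is \'etale, the value $\Delta(\sigma)$ is locally constant on $S$, so by Azumaya \'etale descent we may reduce to the split case $\calQ = M_2$ over $\Spec(\ZZ)$ and then to a single geometric fibre. Over $\QQ$, in the basis $(e_{11}, e_{22}, e_{12}, e_{21})$, the canonical involution $\sigma(X) = \Trd(X)\cdot I - X$ swaps the first two basis vectors and negates the last two, giving $\det(\sigma) = -1$; this forces $\sigma \notin \SO$ in characteristic zero, and hence $\Delta(\sigma) = 1$ universally.

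It then remains to package the splitting: the morphism of $S$-schemes
\[
\SO(\calQ, \Nrd) \times_S \underline{\ZZ/2\ZZ} \longrightarrow \Orth(\calQ, \Nrd), \qquad (g, \varepsilon) \mapsto g \cdot \sigma^\varepsilon,
\]
is bijective on $T$-points for every $S$-scheme $T$ (it is the identity on the $\varepsilon=0$ slice and the translation $g \mapsto g\sigma$ on the $\varepsilon=1$ slice, which identifies $\SO(\calQ, \Nrd)$ with the non-identity component $\Delta^{-1}(1)$), hence an isomorphism of $S$-schemes. The main obstacle I anticipate is the uniform handling of characteristic $2$, where $\SO$ is not simply $\Ker(\det)$ and a bare determinant argument is insufficient; the clean remedy is precisely the appeal to \'etaleness of $\underline{\ZZ/2\ZZ}$, which concentrates the entire verification of $\Delta(\sigma)=1$ onto a single characteristic-zero fibre.
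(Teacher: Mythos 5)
Your proof is correct and follows essentially the same route as the paper: both show the Dickson map is split by the canonical involution, reducing by \'etale descent to the split case in characteristic zero and computing that the involution has determinant $-1$ (you use the matrix-unit basis of $M_2$, the paper the quaternionic basis $1,i,j,ij$ of the real quaternions). Your explicit remarks on the characteristic-$2$ issue and on packaging the splitting as an isomorphism of $S$-schemes are welcome additions but do not change the argument.
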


\begin{proof} We have to show  that the Dickson map
${\rm D} :{\rm O}( \calQ, \Nrd) \to \ZZ/2\ZZ$ is split by applying $1$ to the canonical 
involution. To check that the Dickson invariant of the canonical 
involution is $1$, we can reason \'etale locally, that is to check it for 
each strict henselization $O_{S,s}^{sh}$ where $s$ is a point of $S$.  
In particular, it enables us to assume that $\calQ$ is the split 
quaternion algebra which is defined over $\ZZ$.

We can then deal with $S=\Spec(\ZZ)$ and $\calQ=M_2(\ZZ)$ and it remains to show
that ${\rm D}(\sigma)= 1$ where $\sigma$ is the canonical involution of $M_2(\ZZ)$.
It is enough to check it over $\QQ$ and then the Dickson invariant is nothing but
the determinant by means of the identification $(\ZZ/2\ZZ)_\QQ \cong \mu_{2, \QQ}$ \cite[III.5.2.6]{DG}. 
The  basis 
$$
\left[\begin{array}{cc}
1& 0 \\
0&1 \\
\end{array}
\right], \enskip
\left[\begin{array}{cc}
-1& 0 \\
0&-1 \\
\end{array}
\right], \enskip
\left[\begin{array}{cc}
0& 1 \\
1& 0 \\
\end{array}
\right], \enskip
\left[\begin{array}{cc}
0& - 1 \\
1& 0 \\
\end{array}
\right]
$$
of $M_2(\QQ)$  is a diagonalization   basis for $\sigma$ whose  eigenvalues are  $1,-1,-1,-1$.
The determinant of $\sigma$ is then $-1$, as desired.

\end{proof}

If follows that we have an isomorphism of homogeneous ${\rm O}( \calQ, \Nrd)$-spaces
${\rm O}( \calQ, \Nrd) /  ( \PGL_1(\calQ) \times_S \ZZ/2\ZZ)  \, \simlgr \, \SL_1(\calQ)$.

\begin{thm}\label{isometric}  Let $\calQ'$ be a $\calO_S$-quaternion algebra.
Then $\calQ'$ is isomorphic to $\calQ$ if and only if
the quadratic $S$--form $\Nrd$ and $\Nrd'$ are isometric.
\end{thm}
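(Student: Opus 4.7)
The ``only if'' direction is tautological, so the content is the converse. The strategy is to translate the question into non-abelian descent cohomology and exploit the group-theoretic computations already established.

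Since every $\calO_S$-quaternion algebra is \'etale-locally isomorphic to $M_2(\calO_S)$, the algebra $\calQ'$ is a twisted form of $\calQ$ and so corresponds to a class $[\xi] \in H^1(S, \PGL_1(\calQ))$, namely the class of the $\PGL_1(\calQ)$-torsor $\uIsom_{alg}(\calQ, \calQ')$. The inclusion $\PGL_1(\calQ) \hookrightarrow \Orth(\calQ, \Nrd)$ is compatible with twisting, so the isometry class of the quadratic module $(\calQ', \Nrd')$ is precisely the image of $[\xi]$ under the induced map
\[
H^1(S, \PGL_1(\calQ)) \longrightarrow H^1(S, \Orth(\calQ, \Nrd)).
\]
The hypothesis $\Nrd \simeq \Nrd'$ therefore says exactly that $[\xi]$ lies in the kernel of this map, so it suffices to check that this kernel is trivial.

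To do so, I would appeal to the long exact sequence of pointed sets associated to the sheaf inclusion $\PGL_1(\calQ) \hookrightarrow \Orth(\calQ, \Nrd)$,
\[
\Orth(\calQ, \Nrd)(S) \longrightarrow \bigl(\Orth(\calQ, \Nrd)/\PGL_1(\calQ)\bigr)(S) \buildrel \delta \over \longrightarrow H^1(S, \PGL_1(\calQ)) \longrightarrow H^1(S, \Orth(\calQ, \Nrd)).
\]
By exactness, the kernel under study equals the image of $\delta$, so it suffices to show that the sheaf quotient morphism $\Orth(\calQ, \Nrd) \to \Orth(\calQ, \Nrd)/\PGL_1(\calQ)$ admits a section as sheaves of sets. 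From the preceding lemma together with Proposition \ref{Max}(2), the quotient is identified with $\SL_1(\calQ) \times_S \ZZ/2\ZZ$: the former identifies $\SO(\calQ, \Nrd)/\PGL_1(\calQ) \simlgr \SL_1(\calQ)$, and the latter splits off the $\ZZ/2\ZZ$ factor via the canonical involution. The desired section is then assembled by combining the section $s: q \mapsto L_q$ of $u: \SO(\calQ, \Nrd) \to \SL_1(\calQ)$ (Proposition \ref{Max}(2)) with the canonical involution realizing the $\ZZ/2\ZZ$ factor. Hence $\delta$ is trivial, $[\xi] = 0$, and $\calQ \simeq \calQ'$.

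The main obstacle is not any deep point but rather careful bookkeeping at the interface of non-abelian cohomology and sheaf quotients: verifying that twisting the pair $(\calQ, \Nrd)$ by $\xi$ really does produce the stated image in $H^1(S, \Orth(\calQ, \Nrd))$, and that the two partial sections (left translation and the canonical involution) assemble into a genuine set-theoretic section of $\Orth(\calQ, \Nrd) \to \SL_1(\calQ) \times_S \ZZ/2\ZZ$. Both checks are essentially local and reduce to the split case already treated.
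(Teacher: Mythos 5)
Your proposal is correct and follows essentially the same route as the paper: identify the quaternion algebras with norm isometric to $\Nrd$ with the kernel of $H^1(S,\PGL_1(\calQ)) \to H^1(S,\Orth(\calQ,\Nrd))$, then kill that kernel via the exact sequence of a homogeneous space whose quotient map is split by Proposition \ref{Max}(2) together with the Dickson splitting $\Orth(\calQ,\Nrd)=\SO(\calQ,\Nrd)\times_S\ZZ/2\ZZ$. The only (cosmetic) difference is that you quotient by $\PGL_1(\calQ)$ directly, identifying $\Orth(\calQ,\Nrd)/\PGL_1(\calQ)$ with $\SL_1(\calQ)\times_S\ZZ/2\ZZ$, whereas the paper quotients by $\PGL_1(\calQ)\times_S\ZZ/2\ZZ$ to get $\SL_1(\calQ)$ and then concludes a fortiori.
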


\begin{proof} Since $H^1(S, \PGL_1(\calQ))$ classifies $S$--quaternion algebras and 
$H^1(S, {\rm O}(\calQ, \Nrd))$ classifies the isometry classes of  nonsingular quadratic forms of dimension $4$, 
it follows that the kernel of the map
$$
Ad_*: H^1(S, \PGL_1(\calQ)) \to  H^1(S,  {\rm O}(\calQ, \Nrd))
$$
classifies the  isomorphism classes of quaternions $S$--algebras  
such that  the quadratic $S$--form $\Nrd$ and $\Nrd'$ are isometric.
By applying \cite[III.3.2.2]{Gir} to
the isomorphism ${\rm O}( \calQ, \Nrd) /  ( \PGL_1(\calQ) \times_S \ZZ/2\ZZ)  \, \simlgr \, \SL_1(\calQ)$, 
 we get an exact sequence of pointed sets
$$
{\rm O}( \calQ, \Nrd)(S) \buildrel f \over \to 
\SL_1(\calQ)(S) \to 
H^1(S, \PGL_1(\calQ) \times_S \ZZ/2\ZZ ) \to  H^1(S,  {\rm O}(\calQ, \Nrd)).
$$
By proposition \ref{Max}, the map $f$ admits a retraction so that 
the kernel of \break
 $H^1(S, \PGL_1(\calQ) \times_S \ZZ/2\ZZ ) \to  H^1(S,  {\rm O}(\calQ, \Nrd))$
is trivial. A fortiori, the kernel of $H^1(S, \PGL_1(\calQ)  ) \to  H^1(S,  {\rm O}(\calQ, \Nrd))$
is trivial, as desired.
\end{proof}

\begin{remark} {\rm Knus-Ojanguren-Sridharan's proof uses the even Clifford algebra of the norm forms
to encode the algebra. Somehow we use also the Clifford algebra by means 
of the Dickson invariant which is in the case related to the fact that the simply connected cover of  
$\SO(\calQ,N_{\calQ})$ is  $\SL_1(\calQ) \times_S \SL_1(\calQ)$. 
}
\end{remark}

\section{Octonion  algebras and norms}

Let $R$ be a commutative ring (with unit). From \S 4 of \cite{LPR}, 
a non-associative algebra $C$ over $R$ is called 
an octonion $R$-algebra\footnote{One can of course
globalize this definition, see \cite{P}.} if  it is 
 a finitely generated projective $R$--module of rank $8$,
 contains an identity element
$1_C$ and admits a norm, i.e. a map  $n_C: C \to R$
satisfying  the two following  conditions: 

\medskip

(1) $n_C$ is a nonsingular quadratic form;

\medskip

(2)   $n_C(xy)=n_C(x) \, n_C(y)$ for all $x,y \in C$. 

\medskip

This notion is stable under base  extension and  descends  under
faithfully flat base change of rings.

\medskip

The basic example of an octonion algebra is the split octonion algebra ({\it ibid}, 4.2)
denoted  $C_0$ and   called the algebra of Zorn vector matrices, which is defined over $\ZZ$.
There is another description of this algebra  in \S 1.8 of \cite{SV} over fields
 by the ``doubling process''. It actually  works over $\ZZ$,  we  take 
$$
C'_0= M_2(\ZZ) \oplus M_2(\ZZ)
$$
with multiplication law $(x,y).(u.v)= ( x\, u + v   \, \sigma(y), \sigma(x) v + u \, y )$ ($\sigma$ is
 the canonical involution of $M_2(\ZZ)$) and norm 
$n_{C'_0}(x,y)=\det(x) - \det(y)$.
We know that the fppf $\ZZ$--group sheaf $\uAut(C_0)\cong \uAut(C'_0)$ is representable by 
an affine smooth group $\ZZ$--scheme  $\Aut(C_0)$ \cite[4.10]{LPR}.

\begin{prop} The $\ZZ$--group scheme $\Aut(C_0)$ is the Chevalley group of type $G_2$.
\end{prop}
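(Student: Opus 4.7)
The plan is to verify the assertion fibre by fibre over $\Spec(\ZZ)$ and then invoke the Chevalley--Demazure classification to globalize. Since $\Aut(C_0)$ is a smooth affine $\ZZ$--group scheme by \cite[4.10]{LPR}, and since $\Spec(\ZZ)$ is connected, it suffices to show that every geometric fibre of $\Aut(C_0) \to \Spec(\ZZ)$ is the split connected simple group of type $G_2$, and then to argue that $\Aut(C_0)$ is a semisimple $\ZZ$--group scheme of this type with split fibres.

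First I would recall the classical identification over a field: for every algebraically closed field $k$, the automorphism group of the split Cayley algebra $C_0 \otimes_\ZZ k$ is the split connected simple algebraic group of type $G_2$ over $k$. In characteristic different from $2,3$ this is classical (Cartan, Jacobson; see \cite{SV}), and the remaining characteristics $2$ and $3$ are covered by the work of Chevalley and Hijikata, also discussed in \cite{SV}. In particular every geometric fibre of $\Aut(C_0)$ is connected, semisimple, and of constant type $G_2$.

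Next I would invoke the fibrewise criterion \cite[XIX.2.5]{SGA3}: a smooth affine $S$--group scheme whose geometric fibres are connected reductive of constant type is a reductive $S$--group scheme of that type. Applied to $\Aut(C_0) \to \Spec(\ZZ)$ this yields that $\Aut(C_0)$ is a semisimple $\ZZ$--group scheme of type $G_2$, and all its geometric fibres are split. Since the Dynkin diagram of $G_2$ admits no non-trivial automorphisms, the Chevalley--Demazure existence and uniqueness theorem \cite[XXV]{SGA3} identifies $\Aut(C_0)$ with the unique split Chevalley group of type $G_2$ over $\ZZ$, as required.

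The step I expect to require the most care is the fibrewise identification in characteristics $2$ and $3$, where the usual derivation-algebra argument degenerates; however, the statement that the automorphism group of the split octonion algebra is the split $G_2$ in every characteristic is well documented, so this obstacle reduces to invoking the appropriate reference. Once the fibres are under control, the passage from fibres to a reductive $\ZZ$--group scheme and its identification with the Chevalley form is formal from SGA3.
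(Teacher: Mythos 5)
Your first two steps match the paper: you identify the geometric fibres as split simple groups of type $G_2$ using the classical field-theoretic results (the paper cites \cite[Thm.~2.3.5]{SV} for this), and you conclude that $\Aut(C_0)$ is a semisimple $\ZZ$--group scheme of type $G_2$. That part is fine. The problem is your final step: from ``all geometric fibres are split'' and ``the Dynkin diagram of $G_2$ has no non-trivial automorphisms'' you conclude that $\Aut(C_0)$ is the split Chevalley form. This does not follow. The absence of diagram automorphisms only rules out \emph{outer} forms; semisimple $\ZZ$--group schemes of type $G_2$ are still classified by $H^1(\ZZ, G_2)$ (inner forms), and this set is not trivial. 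Concretely, the automorphism group scheme of a non-split octonion order over $\ZZ$ (e.g.\ the Coxeter order) is a smooth affine $\ZZ$--group scheme all of whose geometric fibres are split of type $G_2$, yet it is not the Chevalley group --- this is precisely the content of the reference \cite{CG} invoked in the paper's concluding remark, which says there are three octonion algebras over $\ZZ$. Your argument, applied verbatim to that group scheme, would ``prove'' a false statement, so a genuine ingredient is missing.

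What the Chevalley--Demazure uniqueness theorem actually gives is that the split form is the \emph{unique} semisimple $\ZZ$--group scheme of type $G_2$ \emph{admitting a split maximal torus} (of rank $2$). So the missing step is to exhibit splitness directly. The paper does this by using the doubling description $C'_0 = M_2(\ZZ) \oplus M_2(\ZZ)$: the evident action of $\PGL_2 \times \PGL_2$ gives a closed embedding into $\Aut(C'_0)$, hence a split torus of rank two inside $\Aut(C_0) \cong \Aut(C'_0)$, and then Demazure's unicity theorem \cite[XXIII, cor.~5.5]{SGA3} applies. You need to supply some such explicit splitting datum (a split maximal torus, or equivalently a trivialization of the corresponding torsor) to close the gap.
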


\begin{proof} Let us first show  that $\Aut(C_0)$ is a semisimple group scheme of type $G_2$, that 
is by definition a smooth affine group scheme whose geometrical fibers 
 are semisimple  groups of type $G_2$ \cite[XIX]{SGA3}. 

The fibers of the affine smooth group $\ZZ$--scheme  $\Aut(C_0)$ are 
indeed semisimple  groups of type $G_2$ according to  theorem 2.3.5 of \cite{SV}.
Hence  $\Aut(C_0)$ is a semisimple group scheme of type $G_2$.
By Demazure's unicity theorem \cite[cor. 5.5]{SGA3}
the Chevalley group of type $G_2$ is  the unique split semisimple group scheme of type $G_2$, that is 
the unique semisimple group scheme of type $G_2$ admitting a split torus of rank two. 
Since ${\PGL_2 \times \PGL_2}$ embeds in $\Aut(C'_0)$,  $\Aut(C'_0)$ contains a two dimensional split 
  torus. Thus  $\Aut(C_0)\cong \Aut(C'_0)$ is the Chevalley group of type $G_2$. 
\end{proof}

We come now to the question whether an octonion algebra is determined
by its norm. 
Let $C$ be an octonion algebra over $R$. We have  natural closed group embeddings of group schemes
$$
\Aut(C) \buildrel j \over \to   {\rm O}(n_C) \subset \GL(C).
$$ 
We get a map in cohomology 
$$
j_*: H^1(R, \Aut(C)) \to H^1(R,{\rm O}(n_C)).
$$
The left handside classifies octonion algebras over $R$
while the right handside classifies $8$-dimensional nonsingular quadratic $R$--forms.
By descent, we have $j_*([C'])= [n_{C'}]$ for each octonion $R$--algebra $C'$.
It follows that the kernel of $j_*$ classifies the octonion algebras over $R$ whose 
norm form is isometric to $n_C$.

\begin{lem} The fppf quotient ${\rm O}(n_C)/ \Aut(C)$ is representable by an affine scheme
of finite presentation over $R$. 
\end{lem}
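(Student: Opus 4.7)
My plan is to realize $\Orth(n_C)/\Aut(C)$ explicitly as the moduli scheme $\calM$ of all octonion algebra structures on the fixed quadratic module $(C, n_C)$, and then prove this identification by reducing to the split case over $\ZZ$.

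Define $\calM$ as the $R$-functor whose $T$-points, for any $R$-algebra $T$, are the pairs $(m,e)$ consisting of a $T$-bilinear multiplication $m : C_T \otimes_T C_T \to C_T$ and an element $e \in C_T$ such that $(C_T, m, e)$ equipped with the form $(n_C)_T$ is an octonion $T$-algebra. The defining axioms (unit, composition law $n_C(m(x,y)) = n_C(x)\, n_C(y)$, alternativity) are polynomial equations in the coefficients of $(m,e)$, so $\calM$ is representable by a closed subscheme of the affine scheme associated to the finitely generated projective $R$-module $\Hom_R(C^{\otimes 2}, C) \oplus C$; in particular $\calM$ is affine and of finite presentation over $R$. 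The group $\Orth(n_C)$ acts on $\calM$ via $g \cdot (m, e) = \bigl((x,y) \mapsto g(m(g^{-1}x, g^{-1}y)),\, g(e)\bigr)$, and the scheme-theoretic stabilizer of the base point $(m_C, 1_C) \in \calM(R)$ is $\Aut(C)$. Hence the orbit morphism factors through a monomorphism of fppf sheaves
$$
\overline{\varphi} : \Orth(n_C)/\Aut(C) \hookrightarrow \calM.
$$
Showing that $\overline{\varphi}$ is an isomorphism then yields representability, affineness, and finite presentation of the quotient in a single stroke.

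To prove $\overline{\varphi}$ is an isomorphism, I would work fppf-locally on $R$: after a faithfully flat base change $(C, n_C) \cong (C_0, n_{C_0})$ becomes the split octonion algebra, reducing the assertion to a statement over $\ZZ$. By Springer--Veldkamp \cite[\S 1.7]{SV}, any octonion algebra over a field $k$ with norm isometric to $n_{C_0, k}$ is isomorphic to $(C_0)_k$, so the orbit map $\Orth(n_{C_0}) \to \calM_{\ZZ}$ is surjective on all geometric fibers. Combined with smoothness of $\Orth(n_{C_0})$ and of the stabilizer $\Aut(C_0) = G_2$ (of relative dimension $14$), this makes the orbit map smooth and fppf surjective, so $\overline{\varphi}$ is an isomorphism of fppf sheaves.

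The main obstacle is the scheme-theoretic (rather than merely fibrewise) transitivity over $\ZZ$: one must verify that $\calM$ is smooth of the expected relative dimension $\dim \Orth_8 - \dim G_2 = 14$ with no extraneous components or infinitesimal structures beyond the single orbit of the split algebra, notably above primes of characteristic $2$ where quadratic form theory is more delicate. Once this geometric transitivity is in place, the identification $\Orth(n_C)/\Aut(C) \cong \calM$ follows by fppf descent, and the lemma is established.
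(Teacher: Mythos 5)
Your route is genuinely different from the paper's. The paper quotes two representability results as black boxes --- $\GL(C)/\Aut(C)$ is affine of finite type by \cite[6.12]{CTS}, and $\GL(C)/{\rm O}(n_C)$ is affine of finite presentation by \cite[lemme 2.26]{SZ} --- and then realizes ${\rm O}(n_C)/\Aut(C)$ as the fibre of the induced morphism $\GL(C)/\Aut(C)\to \GL(C)/{\rm O}(n_C)$ over the distinguished point. You instead propose to identify the quotient with an explicit moduli scheme $\calM$ of octonion multiplications on the fixed quadratic module $(C,n_C)$. The construction of $\calM$ as a finitely presented closed subscheme of the affine scheme attached to $\Hom_R(C^{\otimes 2},C)\oplus C$, the ${\rm O}(n_C)$-action, the identification of the stabilizer of the base point with $\Aut(C)$ (every algebra automorphism is automatically an isometry, since the norm is uniquely determined by the algebra structure), and the resulting monomorphism $\overline{\varphi}$ are all correct.

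The gap is precisely the step you flag as ``the main obstacle'' and then leave open. Transitivity on geometric fibres together with smoothness of ${\rm O}(n_C)$ and of $\Aut(C)$ shows that the orbit ${\rm O}(n_C)/\Aut(C)$ is smooth, but it does \emph{not} show that the monomorphism $\overline{\varphi}$ is surjective: if $\calM$ carried nilpotents or embedded components (your worry at primes over $2$ is legitimate, since $\calM$ is cut out by the identity $n_C(m(x,y))=n_C(x)n_C(y)$ whose Jacobian you have not analysed), the orbit would be a proper closed subscheme of $\calM$ with the same underlying points, and your conclusion would fail. Proving directly that $\calM$ is smooth of relative dimension $14$ over $\ZZ$ is a nontrivial deformation-theoretic computation that the proposal does not carry out. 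Fortunately the step can be closed without any smoothness of $\calM$, by a purely sheaf-theoretic argument: a $T$-point of $\calM$ is an octonion $T$-algebra with underlying quadratic module $(C_T,(n_C)_T)$; since any two octonion algebras are fppf-locally isomorphic (this is the fact, already used in the paper, that $H^1(\cdot,\Aut(C))$ classifies them), there exists fppf-locally on $T$ an algebra isomorphism from $(C_T,m,e)$ onto $(C_T,m',e')$, and any such isomorphism preserves the norm, hence lies in ${\rm O}(n_C)$. Thus every section of $\calM$ is fppf-locally in the orbit of the base point, so $\overline{\varphi}$ is an epimorphism of fppf sheaves, and being a monomorphism as well it is an isomorphism. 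With that substitution your argument is complete, and it has the virtue of being more self-contained and of identifying the quotient with a concrete moduli space; the paper's proof is shorter but leans on external references.
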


\begin{proof}  
According to \cite[6.12]{CTS}, the fppf quotient $\GL(C)/ \Aut(C)$ is representable by an affine scheme of finite type over $R$. 
It is of finite presentation over $R$ by the standard limit argument \cite[VI$_B$.10.2]{SGA3}.
 In the other hand, the fppf sheaf
 $\GL(C) / {\rm O}(n_C)$ is representable by an affine scheme of finite presentation over $R$
\cite[lemme 2.26]{SZ}. Therefore the ``kernel''  ${\rm O}(n_C)/ \Aut(C)$
of the natural map  $ \GL(C)/ \Aut(C) \to \GL(C) / {\rm O}(n_C)$ is representable
by an affine scheme of finite type.
\end{proof}

We denote by $A(C)$ the coordinate ring of the affine scheme ${\rm O}(n_C)/ \Aut(C)$.

\begin{thm}\label{main} Assume that $R$ is a non trivial  $\QQ$-ring. Then 
the $\Aut(C)$-torsor ${\rm O}(n_C) \to \Spec(A(C))$ is not trivial, so that 
$\ker(j_{*,A(C)})$ is not trivial.
\end{thm}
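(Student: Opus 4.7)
The plan is to reduce the question to a topological non-triviality statement at the level of real points. Observe first that triviality of $p\colon \mathrm{O}(n_C) \to \Spec A(C)$ is preserved under any base change $R \to R'$, so it suffices to exhibit one base change for which the torsor becomes non-trivial. Using that $R$ is a nonzero $\QQ$-algebra, I pass through a residue field of $R$ and an algebraically closed extension (in which the octonion algebra becomes split, reducing to $C_0$), and thence to $\CC$ or $\RR$ via a Lefschetz-style specialization argument for sections of finite type schemes. In this way one is reduced to showing non-triviality for $R = \RR$ and $C$ the compact Cayley octonion algebra.

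In this concrete setting, $\mathrm{O}(n_C)(\RR) = \mathrm{O}(8)$ and $\Aut(C)(\RR) = G_2$ (the compact real form). A scheme-theoretic section $s\colon \Spec A(C) \to \mathrm{O}(n_C)$ would induce, on $\RR$-points, a continuous (even real-algebraic) map $(\Spec A(C))(\RR) \to \mathrm{O}(8)$. Its restriction to the image of the identity component $\mathrm{SO}(8) \subset \mathrm{O}(8)$ — which is naturally identified with the topological orbit space $\mathrm{SO}(8)/G_2$ — would yield a continuous section of the principal $G_2$-bundle $\mathrm{SO}(8) \to \mathrm{SO}(8)/G_2$. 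The values lie in $\mathrm{SO}(8)$ because $G_2 \subset \mathrm{SO}(8)$, so every coset $gG_2$ with $g \in \mathrm{SO}(8)$ sits inside $\mathrm{SO}(8)$.

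A continuous section of a principal $G_2$-bundle trivializes it, so $\mathrm{SO}(8) \simeq G_2 \times \mathrm{SO}(8)/G_2$ topologically and $G_2$ becomes a retract of $\mathrm{SO}(8)$. The inclusion thus induces split injections $\pi_k(G_2) \hookrightarrow \pi_k(\mathrm{SO}(8))$ for every $k$. Taking $k=6$ produces the contradiction: Mimura's classical computation gives $\pi_6(G_2) \cong \ZZ/3\ZZ$, while the inequality $6 \leq 8-2$ places $\mathrm{SO}(8)$ in the Bott stable range, so $\pi_6(\mathrm{SO}(8)) = \pi_6(\mathrm{SO}) = 0$. No nonzero subgroup injects into the trivial group, ruling out the existence of $s$ and proving the torsor non-trivial — whence the class in $\ker(j_{*,A(C)})$ is non-zero.

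The main obstacle will be carrying out the reduction step rigorously for an arbitrary pair $(R, C)$: one must thread together the étale trivialization of $C$, the specialization of sections from large algebraically closed fields down to $\CC$, and finally the passage to real-point topology. Once this is in place, the topological heart of the proof is short and rests entirely on the essential input $\pi_6(G_2) \neq 0$ — precisely the homotopical fact absent in the quaternion analogue, where the $\PGL_1(\calQ)$-torsor $\SO(\calQ, \Nrd) \to \SL_1(\calQ)$ admits an explicit section via left translation by Proposition~\ref{Max}.
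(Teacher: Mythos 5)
Your topological core — the $\pi_6$ obstruction with $\pi_6(G_2)\cong\ZZ/3\ZZ$ and $\pi_6(\SO(8))=0$ in the stable range, applied to a hypothetical continuous section of the principal bundle $\SO(8)\to \SO(8)/G_2$ — is correct and is exactly the paper's Proposition \ref{homotopy}. The problem is the reduction. You correctly note that triviality of the torsor is preserved under base change, so it suffices to find \emph{some} $R\to R'$ over which the torsor is non-trivial; passing to a residue field and then to an algebraic closure $F$, and using a spreading-out/Lefschetz argument, one reduces to the case of an algebraically closed field of characteristic zero, hence to $\CC$ with the split algebra $C_0$. But you then claim to reduce further to $R=\RR$ with the \emph{compact} Cayley algebra, and this step goes the wrong way: there is no ring map from $\CC$ (or from $F$) to $\RR$, and non-triviality of the torsor over $\RR$ does not imply non-triviality of its base change to $\CC$ — a section could in principle appear after complexification. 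So establishing the statement for $(\RR, C_{\mathrm{compact}})$ does not, by itself, feed back into your reduction chain; what you actually need is non-triviality over $\CC$.

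The paper closes precisely this gap with an extra argument you omit: by the Cartan decomposition, $G_2(\CC)$ and ${\rm O}_8(\CC)$ are homeomorphic to $G_2^{\mathrm{c}}\times\RR^m$ and ${\rm O}_8(\RR)\times\RR^n$ respectively, so $\pi_6(G_2(\CC))\cong\ZZ/3\ZZ$ while $\pi_6({\rm O}_8(\CC))=0$, and the same retraction argument applied to the \emph{complex} points of a hypothetical section yields the contradiction directly over $\CC$. In other words, the real compact case is only a warm-up; the statement that must be proved topologically is the one about the complex Lie groups, and it is their maximal compact subgroups (not the real points of the real forms) that carry the homotopy computation. Your write-up flags the reduction as "the main obstacle" but then aims it at the wrong target; replacing "reduce to $\RR$ and the compact Cayley algebra" by "reduce to $\CC$ and $C_0$, then compute $\pi_6$ of the complex groups via their maximal compact subgroups" repairs the argument and brings it in line with the paper's proof.
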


\begin{remark}{\rm By inspection of the proof, the result holds also for 
 ${\rm SO}(n_C) \to  {\rm SO}(n_C)/ \Aut(C)$. If $R= \CC$, it 
provides then a counterexample over a connected smooth complex affine variety.   
 }
\end{remark}

Let us do first a special  case.

\begin{prop}\label{homotopy} Let $C/\RR$ be the ``compact'' Cayley octonion algebra. 
Then theorem \ref{main} holds is this case.
\end{prop}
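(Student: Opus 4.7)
The plan is to reduce the algebro-geometric non-triviality of the torsor to a purely topological obstruction, by passing to real points. Since $C/\RR$ is the compact Cayley octonion algebra, the norm $n_C$ is positive definite of rank $8$, so $\Orth(n_C)(\RR)$ is the compact group $\Orth(8)$ and $\Aut(C)(\RR)$ is the compact anisotropic real form $G_2^c$ of $G_2$; both are compact real Lie groups, and $G_2^c$ acts freely on $\Orth(8)$ on the right.

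First I would assume for contradiction that the $\Aut(C)$-torsor $\Orth(n_C)\to\Spec A(C)$ is trivial. It would then admit an algebraic section, and on real points this would yield a continuous section of the smooth submersion $\Orth(8)\to (\Spec A(C))(\RR)$. Next I would check that the image $V$ of $\Orth(8)$ in $(\Spec A(C))(\RR)$ coincides with the topological quotient $\Orth(8)/G_2^c$, and that the restricted map $\Orth(8)\to V$ is a topological principal $G_2^c$-bundle (this follows from freeness of the action and compactness). A continuous section would trivialize this principal bundle, giving $\Orth(8)\cong G_2^c\times V$; passing to the identity component, one would obtain a homeomorphism $\SO(8)\cong G_2^c\times (\SO(8)/G_2^c)$.

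Finally I would extract the contradiction at the level of $\pi_6$. On the one hand, $\pi_6(\SO(8))=0$ by Bott periodicity, since $\pi_6(\SO)=0$ and $\pi_6(\SO(8))\to\pi_6(\SO)$ is an isomorphism by the stability range. On the other hand, $\pi_6(G_2^c)=\ZZ/3\ZZ$ is a classical computation, obtained for instance from the fibration $G_2^c\hookrightarrow\Spin(7)\to S^7$ together with the known image of $\pi_7(\Spin(7))\to\pi_7(S^7)=\ZZ$. A product decomposition $\SO(8)\cong G_2^c\times Y$ would force $\pi_6(G_2^c)$ to inject into $\pi_6(\SO(8))$, which is absurd.

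The main obstacle is justifying the middle step rigorously: one must confirm that the real locus of the fppf quotient, restricted to the image of $\Orth(8)$, really is the topological quotient $\Orth(8)/G_2^c$, and that an algebraic section of the scheme-theoretic torsor descends to a continuous section of the topological principal bundle. Once this identification is secured, the splitting obstruction furnished by the two classical homotopy computations closes the argument.
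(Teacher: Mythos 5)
Your proposal is correct and follows essentially the same route as the paper: assume the torsor splits, pass to real points, and derive a contradiction from $\pi_6(G_2^c)\cong\ZZ/3\ZZ$ versus $\pi_6(\Orth(8))=\pi_6(\SO(8))=0$. The "main obstacle" you flag is sidestepped in the paper exactly as your own decomposition suggests: a trivialization gives a $G$-equivariant isomorphism $\Orth(n_C)\cong(\Orth(n_C)/\Aut(C))\times G$ over the base, hence a retraction of $G\hookrightarrow\Orth(8)$ already at the level of schemes, so one never needs to identify the real locus of the quotient with the topological quotient $\Orth(8)/G_2^c$.
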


\begin{proof} In this case $G=\Aut(C)/\RR$ is the  anisotropic real form of $G_2$ and
we consider its embedding in the ``compact''  ${\rm O}_8$.
We reason by  contradiction assuming that 
 the $G$--torsor   ${\rm O_8} \to {\rm O_8}/G$ is split. It follows that there is a $G$-equivariant isomorphism
${\rm O_8}  \cong {\rm O_8}/G \times_\RR G$ over ${\rm O_8}/G$. Hence the map $G \to {\rm O_8}$ admits 
a section.  
Taking the real points,  it follows that  the map $G(\RR) \to {\rm O_8}(\RR)$ admits a continuous section, hence 
the homotopy group  $\pi_n( G(\RR), \bullet)$ is a direct summand of   $\pi_n( {\rm O_8}(\RR), \bullet)$ for all $n \geq 1$. 
From the tables \cite[p. 970]{M}, we have
 $\pi_6( G(\RR),\bullet) \cong \ZZ/3\ZZ$ 
and $\pi_n( {\rm O_8}(\RR), \bullet) = \pi_n( {\rm SO_8}(\RR),\bullet) =0$, hence a contradiction.

\end{proof}

We can proceed to the proof of theorem \ref{main}.

\begin{proof}
We claim that  the above counterexample survives when extending the scalars
to  $\CC$. According to the Cartan decomposition, there are 
homomeorphisms $G(\CC) \cong G(\RR) \times \RR^m$ and
 ${\rm O}_8(\CC) \cong  {\rm O}_8(\RR) \times \RR^n$
Hence $\pi_6(G(\CC), \bullet)= \ZZ/3\ZZ$ and does not  inject in $\pi_6({\rm O}_8(\CC),\bullet)=0$.

In other words, theorem \ref{main} holds for the case $R=\Spec(\CC)$ and $C=C_0$.
It holds over  $\overline \QQ$ and over an arbitrary
algebraically closed field of characteristic zero.

For the general case, we consider a morphism $R \to F$ where $F$ is an algebraically closed field.
Since  the $\Aut(C)_F$-torsor ${\rm O}(n_C)_F \to {\rm O}(n_C)_F/ \Aut(C)_F$ is not split, it follows
that   the   $\Aut(C)_F$-torsor ${\rm O}(n_C) \to {\rm O}(n_C)/ \Aut(C)$ is not split.
\end{proof}

\medskip

\noindent{\bf Concluding remarks.}
(1) The rings occuring in the examples are  of dimension 14. The next question is to know
the minimal dimension for the counterexamples. M. Brion communicates us a smaller example, say over the
 complex numbers. Since the action of map $G_2$ on the complex octonions $C$ preserves 
$1_C$ and the octonions of trace $0$, the map $G_2 \to  \SO_8$  takes value in $\SO_7 \subset \SO_8$.
A fortiori the $G_2$--torsor $\SO_7 \to \SO_7/G_2=\Spec(B)$ provides an example of 
a non trivial octonion algebra over $B$ having trivial norm. The dimension of $B$ is 
then $7$. Also the homogeneous space  $\SO_7/G_2$  occurs as the complement 
of a smooth quadric in  $\PP^7$. Let us  explain this geometric fact.
Firstly  the map $G_2 \to \SO_7$ lifts in $G_2 \to \Spin_7$.
The spinorial action of $\Spin_7$  on $\CC^7$  has been investigated by Igusa
 \cite[prop. 4]{I}. The $\Spin_7$--orbits in $\CC^7$ are $0$, 
the orbit of a vector of highest weight and a one parameter family 
of closed orbits with stabilizers $G_2$,
defined  by an equation  $g(x) = t$ where $g$ is an invariant
quadratic form. It follows that the induced action of $\SO_7$ on the projective space 
$\PP^7$ has two orbits, one open $\SO_7/G_2$ and one closed which is a smooth projective
quadric.

\smallskip

(2) For the ring $\ZZ$, van der Blij-Springer showed that  there are only  
two octonions algebras  and  having distinct norm forms \cite[\S 4]{BS} (see also \cite{CG}).
Hence octonion algebras over $\ZZ$ are determined by their 
norms. 
For other rings of integers, it seems to be an open question. 

  \medskip

 \end{document}